\newtheorem{theorem}{Theorem}
\newtheorem{lemma}{Lemma}
\newtheorem{corollary}{Corollary}
\newtheorem{claim}{Claim}
\newcommand{\Z}{\mathbb{Z}}
\newcommand{\Au}{\mathrm{Aut}}
\begin{document}

\parindent = 0cm
\parskip = .3cm

\title[Quotients by Wreath Products]{Symmetric Chain Decompositions of\\Quotients of Chain Products by Wreath Products}

\author{Dwight Duffus}
\address {Mathematics \& Computer Science Department\\
   Emory University, Atlanta, GA  30322, USA}
   \email[Dwight Duffus]{dwight@mathcs.emory.edu}
   
\author{Kyle Thayer}
\address {Mathematics \& Computer Science Department\\
   Emory University, Atlanta, GA  30322, USA}
\email[Kyle Thayer]{kyle.thayer@gmail.com}

\keywords{symmetric chain decomposition; Boolean lattice}

\subjclass[2000]{Primary: 06A07}

\begin{abstract}
Subgroups of the symmetric group $S_n$ act on powers of chains $C^n$ by permuting coordinates, and
induce automorphisms of the ordered sets $C^n$.  The quotients defined are candidates for symmetric
chain decompositions.  We establish this for  some families of groups in order to enlarge the collection of
subgroups $G$ of the symmetric group $S_n$ for which the quotient $B_n/G$ obtained from the $G$-orbits
on the Boolean lattice $B_n$ is a symmetric chain order.  The methods are also used to provide an elementary
proof that quotients of powers of SCOs by cyclic groups are SCOs.
\end{abstract}

\maketitle

\section{Introduction}\label{s:intro}

We are interested in one of the most well-studied symmetry properties of a finite partially ordered set -- possessing a partition into symmetric chains.  We want to determine circumstances under which having of a symmetric chain decomposition is preserved by quotients.

Here we restrict our attention to finite ordered sets $P$ with minimum element, denoted $0_P$, in which all maximal chains have the same length.
Such $P$ have
a rank function $r= r_P$ defined as follows:  for all $x \in P$, $r(x)$ is the maximum length $l(C) = |C| - 1$ over all chains 
$C \subseteq P$ with  maximum element $x$.  The {\it rank} $r(P)$ of $P$ is 
the maximum of $r(x)$ over all $x \in P$ and we call such $P$ {\it ranked} partial orders.

In a ranked order $P$ a chain $x_1 < x_2 < \dots < x_k$ is {\it saturated} if for each $i$ there is no $z$ such that
$x_i < z < x_{i + 1}$.  Call the saturated chain  \emph{symmetric} if 
$r(x_1) + r(x_k) = r(P)$.   A \emph{symmetric chain decomposition}, an \emph{SCD}, of $P$ is a partition of $P$ into 
symmetric chains.  If $P$ has an SCD, call $P$ a {\it symmetric chain order}, an {\it SCO}.  
Here, we are concerned with a particular family of SCOs, products of finite chains, and whether quotients of
these are also SCOs.

For any partially ordered set $P$ and any subgroup $G$ of $\Au (P)$ the automorphism group of $P$, let $P/G$ 
denote the quotient poset.  That is, the elements of $P/G$ are the orbits induced by $G$, denoted by $[x]_G$, or just $[x]$
when no confusion should arise.  And, $[x] \le [y]$ in $P/G$ 
if there are $x' \in [x]$ and $y' \in [y]$ such that $x' \le y'$ in $P$.    Canfield and Mason \cite{CM} conjectured that for 
$P = B_n$, the Boolean lattice of all subsets  of an $n$-element set ordered by containment, $B_n / G$ is an SCO 
for all subgroups $G$ of $\Au (B_n)$.  (Observe that $\Au (B_n) \cong S_n$, the symmetric group on $[n] := \{1, 2, \ldots, n\}$.)   A more general problem is to investigate
conditions on an SCO $P$ and a subgroup $G$ of $\Au (P)$ under which $P/G$ an SCO.

In studying Venn diagrams, 
Griggs, Killian and Savage \cite{GKS} explicitly constructed an SCD of the quotient $B_n/G$ for $n$ prime and given 
that $G$ is generated by a single $n$-cycle.  They asked if this {\it necklace} poset is an SCO for arbitrary $n$.  
Jordan \cite{KKJ} proved that it is by constructing an SCD of the quotient on the SCD in $B_n$ based on an SCD obtained by
Greene and Kleitman \cite{GK} for the Boolean lattice.   P. Hersh and A. Schilling \cite{HS} gave another proof of Jordan's result with an explicit construction of an SCD in $B_n/\mathbb{Z}_n$ via invention of a cyclic version of Greene and Kleitman's bracketing procedure.
We showed that $B_n/G$ is an SCO provided that $G$ is generated by 
powers of disjoint cycles \cite{DMT}.  While this generalizes Jordan's result somewhat,  the method of proof is likely
more interesting in that we construct an SCD of the quotient by refining, or pruning, the Greene-Kleitman SCD in a more direct way than Jordan.

Results can be extended in several directions.  Dhand \cite{D} has shown that if $P$ is an SCO and $\mathbb{Z}_n$ acts
on $P^n$ in the usual way (by permuting coordinates) then $P^n/\mathbb{Z}_n$ is an SCO.  (This is the same as
considering $P^n/G$ where $G$ is generated by an $n$-cycle.) His methods are algebraic.  As a special case,
in some sense the base case of his argument, for any chain $C$, $C^n/G$ is an SCO for $G$ generated by an
$n$-cycle.  We have a somewhat more general version of this special case, that $C^n/G$ is an SCO for $G$ generated
by powers of disjoint cycles \cite{DMT}.  The proof again proceeds by pruning the Greene-Kleitman SCD to give an SCD of the quotient.

Quotients of the form $(C_1 \times C_2 \times \cdots \times C_n) /K$, where each $C_i$ is a chain, are interesting for 
several reasons.  First, it follows from a result of Stanley \cite{RPS3} that  for any chain product and all subgroups $K$
of its automorphism group, $(C_1 \times C_2 \times \cdots \times C_n) /K$
is rank-symmetric, rank-unimodal, and strongly Sperner (see, for instance, \cite{E} or \cite{KKJ} for definitions).  An SCO has
these three properties.  While these conditions are not sufficient to insure an SCD, Griggs \cite{JRG} showed that a 
ranked ordered set with the LYM property, rank-symmetry and rank-unimodality is an SCO.   And it is the case that
products of chains have the LYM property.  (We shall see, at the end of Section \ref{s:examples}, that Stanley \cite{RPS}
was first to ask if certain quotients of $B_n$ are SCOs.)

Second, the automorphism group of a product of chains consists of just those maps that permute coordinates corresponding
to chains of the same length.  To be more precise, for 

\centerline{$P = \prod_{i = 1}^{m} C_i^{n_i},  \ C_j \ncong C_k \ \text{for} \ j \ne k,$}
and $\phi \in \Au(P)$ there exist $\phi_i \in \Au(C_i^{n_i})$ and $\sigma_i \in S_{n_i}$, $i = 1, 2, \ldots, m$, such that 
$\phi = ( \phi_1, \phi_2, \ldots, \phi_m )$
and for $\overline{x} = (x_1 , x_2, \ldots, x_{n_i} )\in  C_i^{n_i}$,
$$\phi_i (\overline{x}) = (x_{\sigma_i^{-1}(1)}, x_{\sigma_i^{-1}(2)}, \ldots, x_{\sigma_i^{-1}(n_i)} ).$$
In particular, $\Au (P)$ is identified with a subgroup of $S_{n_1} \times S_{n_2} \times \cdots \times S_{n_m}$ that is itself a product
of wreath products (see Section \ref{s:mainresult}).   These observations are
justified by examining the action of an automorphism of a chain product on the atoms of the product.  They are also a 
consequence of results of Chang, J\'onsson and Tarski \cite{CJT} on the refinement property for product decompositions of
partially ordered sets. 

Third, since a product of SCOs is an SCO (first proved by Katona \cite{K} and rediscovered more than once \cite{E}), whenever a subgroup $K$ of $\Au(P)$ factors into subgroups $K_i$ of $\Au(C_i^{n_i})$,
$P/K$ is an SCO provided that each $C_i^{n_i}/K_i$ is an SCO.  Thus, $P/K$ is an SCO whenever each of the factors
$K_i$ is generated by powers of disjoint cycles, as noted in \cite{DMT} in Corollary 7.

Finally, one can use the fact that $(C_1 \times C_2 \times \cdots \times C_n )/K$ has an SCD for certain groups $K$
to prove that $B_n /G$ has an SCD for groups $G$ that are wreath products involving these $K$.  This is described
precisely in Section \ref{s:mainresult}.    The main results are stated and proved there.   In Section \ref{s:dhand} we show
how the methods developed to prove Theorem \ref{T:main} can be used to prove, in a very elementary way,  the result of 
Dhand \cite{D} mentioned above.  Examples and questions are
presented in Section \ref{s:examples}.   

\section{The Main Result}\label{s:mainresult}

We requires some terminology and notation to state the main result.  We use standard ordered set notation.  See \cite {DM} for 
background on permutation groups.  

For this section, let $n = k t$ for $n, k, t \in \mathbb{N}$ and let $B_n$ denote the Boolean lattice of subsets of
the $n$-element set $[k] \times [t]$ of ordered pairs.  Let $S_X$ denote symmetric
group on any set $X$.  It is convenient to use $S_n$ in place of $S_{ [k] \times [t]}$, while using $S_k$ and $S_t$
for the symmetric groups on $[k]$ and $[t]$, respectively.  Let 
\begin{equation}\label{e:partition}
[k]\times[t] = N_1 \cup N_2 \cup \cdots \cup N_t \ \  \text{where} \ \ N_r = [k] \times \{r\}, \ (r = 1, 2, \ldots, t).
\end{equation}

Given subgroups $K$ of $S_k$ and $T$ of $S_t$, the wreath product  $G =  K \wr T$ of $K$ by $T$ is the set of all
$\phi \in S_n$ defined as follows: given $\overline{\rho} = (\rho_1, \rho_2, \ldots, \rho_t ) \in K^t$ and $\tau \in T$, for all
$(i, r) \in [k]\times[t]$,
\begin{equation}\label{e:phi}
\phi (i, r) = (\rho_r (i), \tau(r) ).
\end{equation}  

Wreath products allow us to extend the methods in \cite{DMT} to larger families of groups.
\begin{theorem}\label{T:main}
Let $n, k, t$ be positive integers and  $n = kt$.   Then $B_n/G$ is a symmetric chain order for any subgroup $G$ of 
$\Au (B_n)$ defined as follows: $G = K \wr T$ where $K$ is a subgroup of $S_k$, $T$ is a subgroup of $S_t$, and
both $K$ and $T$ are generated by powers of disjoint cycles.
\end{theorem}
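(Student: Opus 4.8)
The plan is to recast the quotient as a cyclic quotient of a power of a smaller, already-understood quotient, and then to handle that cyclic quotient by pruning a product symmetric chain decomposition. I would begin with a reformulation. Using the partition \eqref{e:partition}, a subset $S\subseteq[k]\times[t]$ is the same datum as the tuple $(S_1,\dots,S_t)$ with $S_r=\{i\in[k]:(i,r)\in S\}$, and $S\mapsto(S_1,\dots,S_t)$ is an order isomorphism $B_n\cong(B_k)^t$. Formula \eqref{e:phi} then shows that $\phi=(\overline\rho,\tau)\in G=K\wr T$ acts on $(B_k)^t$ by $(S_1,\dots,S_t)\mapsto(\rho_{\tau^{-1}(1)}S_{\tau^{-1}(1)},\dots,\rho_{\tau^{-1}(t)}S_{\tau^{-1}(t)})$; that is, $G$ acts as the imprimitive wreath product action in which the base $K^t$ acts componentwise through the action of $K$ on $B_k$, and the top group $T$ permutes the $t$ coordinates. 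Since $G=K^t\rtimes T$ with $K^t$ normal, and a quotient by a group action may be taken in stages (and the quotient of a product by a componentwise product action is the product of the quotients), $B_n/G\cong\bigl((B_k)^t/K^t\bigr)\big/(G/K^t)\cong(B_k/K)^t/T$. Put $Q:=B_k/K$; as $K$ is generated by powers of disjoint cycles, $Q$ is an SCO by \cite{DMT}. So it remains to prove that $Q^t/T$ is an SCO whenever $Q$ is an SCO and $T$ is generated by powers of disjoint cycles.

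Next I would reduce this to a single cyclic rotation. Writing $T=\langle\gamma_1^{a_1},\dots,\gamma_m^{a_m}\rangle$ with the $\gamma_j$ pairwise disjoint cycles, one has $T=T_1\times\cdots\times T_m$, where $T_j=\langle\gamma_j^{a_j}\rangle$ acts on the coordinate block $I_j$ moved by $\gamma_j$ and the $I_j$ partition $[t]$. Hence $Q^t/T\cong\prod_{j=1}^m\bigl(Q^{I_j}/T_j\bigr)$, and since a product of SCOs is an SCO (Katona \cite{K}) it suffices to treat one factor $Q^c/\langle\gamma^a\rangle$ with $\gamma$ a single $c$-cycle. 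Setting $d=\gcd(a,c)$ and $e=c/d$, the permutation $\gamma^a$ has $d$ orbits, each of size $e$; re-indexing the $c$ coordinates by $\mathbb{Z}_e\times\{1,\dots,d\}$ turns $\langle\gamma^a\rangle\cong\mathbb{Z}_e$ into the group cyclically rotating the $e$ blocks of $d$ coordinates, so $Q^c/\langle\gamma^a\rangle\cong(Q^d)^e/\mathbb{Z}_e$; and $Q^d$ is again an SCO. Thus Theorem~\ref{T:main} reduces to the core statement: \emph{for every finite SCO $P$ and every $e\ge1$, the quotient $P^e/\mathbb{Z}_e$ by cyclic rotation of the coordinates is an SCO} (this is the theorem of Dhand \cite{D}, to be obtained here by the present methods and reused in Section~\ref{s:dhand}).

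For the core statement I would argue that it suffices to produce an SCD $\mathcal{E}$ of $P^e$ whose chains are permuted by the rotation group $\mathbb{Z}_e$: choosing one chain from each $\mathbb{Z}_e$-orbit and taking its image in $P^e/\mathbb{Z}_e$ then yields an SCD of the quotient, since a rotation is a rank-preserving automorphism (so a chain of $\mathcal{E}$ fixed setwise is fixed pointwise and maps isomorphically onto a saturated chain of $P^e/\mathbb{Z}_e$, symmetric because ranks and the top rank descend to the quotient), chains in one $\mathbb{Z}_e$-orbit have a common image, and images of chains from distinct orbits are disjoint and exhaust $P^e/\mathbb{Z}_e$. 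To construct such an $\mathcal{E}$, I would fix an SCD $\mathcal{D}$ of $P$ --- which assigns to each $x\in P$ a position (bottom, interior, or top) within its $\mathcal{D}$-chain --- and run a Greene-Kleitman-style bracketing on the coordinate sequences of the elements of $P^e$ relative to $\mathcal{D}$, designed to be invariant under cyclic shift, in the spirit of the cyclic bracketing of \cite{HS} but for an arbitrary SCO base. The main obstacle, and essentially the only place requiring real work, is this construction: arranging the bracketing so that (i) simultaneously raising or lowering the matched coordinates within their $\mathcal{D}$-chains sweeps out a saturated symmetric chain of $P^e$, and (ii) the whole assignment commutes with cyclic rotation, so that $\mathbb{Z}_e$ genuinely permutes the resulting chains. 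Everything else --- the wreath reformulation, the stagewise quotient, the splitting of $T$, and the passage from $\mathcal{E}$ to an SCD of $P^e/\mathbb{Z}_e$ --- is routine once \cite{DMT}, Katona's product theorem \cite{K}, and this construction are in hand.
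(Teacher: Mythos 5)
Your opening reductions are correct and essentially agree with what the paper does implicitly: the identification $B_n/G\cong\bigl((B_k/K)^t\bigr)/T$ via the stagewise quotient is the content of (\ref{e:inside}) together with Claim~\ref{cl:bijection}, the splitting of $T$ into factors supported on disjoint blocks is sound, and the conjugation turning $\langle\gamma^a\rangle$ into a block rotation is fine. Your lemma that a $\mathbb{Z}_e$-invariant SCD of $P^e$ descends to an SCD of $P^e/\mathbb{Z}_e$ is also correct. The problem is where your reduction lands and what you then leave unproved.

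You reduce the theorem to the statement that $P^e/\mathbb{Z}_e$ is an SCO for an \emph{arbitrary} finite SCO $P$ (here $P=(B_k/K)^d$, which is in general not a product of chains), and you propose to establish this by constructing a rotation-invariant, Greene--Kleitman-style bracketing SCD of $P^e$ relative to an SCD of $P$. That construction is never carried out --- you yourself flag it as ``the main obstacle, and essentially the only place requiring real work'' --- and it is genuinely nontrivial: the cyclic bracketing of \cite{HS} is tailored to $B_n$, and it is not at all clear that a family of symmetric saturated chains of $P^e$ that is permuted by the rotation group exists for every SCO $P$; that is a strictly stronger statement than Dhand's theorem, not a routine generalization. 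So as written the proof has a hole exactly at its load-bearing step. (You could instead close the gap by citing \cite{D} outright for the core statement, which would make the argument correct but would derive Theorem~\ref{T:main} from Dhand's algebraic result --- and would be circular if reused for Theorem~\ref{T:dhand} as you suggest.)

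The paper's route avoids ever needing the general statement. After passing to $(B_k/K)^t$, it uses an SCD $C_1,\dots,C_s$ of $B_k/K$ to partition the power into grids $C(\overline{j})$, $\overline{j}\in[s]^t$, as in (\ref{e:grids}); each grid is a \emph{product of chains}, $T$ permutes the grids, and by Lemma~\ref{L:stabilizer} the stabilizer $T_{\overline{j}}$ is again generated by powers of disjoint cycles. One then only needs Lemma~\ref{l:cycles} (quotients of chain products by such groups are SCOs, from \cite{DMT}), applied grid by grid, plus the bookkeeping of Claims~\ref{cl:bijection} and \ref{cl:quotient}. If you insert that grid decomposition in place of your proposed bracketing --- i.e., apply it to $P^e$ with $P=(B_k/K)^d$ --- your argument closes; without it, or without an actual construction of the invariant bracketing, the proof is incomplete.
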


The three subsections below are organized as follows.   In {\bf\S\ref{ss:required}}, we specify the properties of $K$ and $T$ that are needed in our proof that for $G = K \wr T$, $B_n/G$ is an SCO.  In {\bf \S\ref{ss:sufficiency}}, we show that these properties do indeed suffice.  In
{\bf \S\ref{ss:powers}}, we prove that if subgroups $K$ and $T$ are each generated by powers of disjoint cycles then they do have the
required properties.

\subsection{The required properties}\label{ss:required}
Let us assume that $G$ is a subgroup of $S_n$ and $G = K \wr T$ with $K$ a subgroup of $S_k$ and $T$ a subgroup of
$S_t$.    To derive the requisite properties, some notation and a few observations are needed.

Given a partially ordered set $P$ and subsets $P_i$, $i = 1, 2, \ldots, m$, use $P = \sum_{i = 1}^m P_i$ to mean
that $P$ is partitioned by the family of $P_i$'s and the order on each $P_i$ is the restriction of the order on $P$, 
with no information given about the order relations between elements of distinct  $P_i$'s.  For a subposet $Q$ of
a ranked partially ordered set $P$, $Q$ is said to be {\it saturated} if for all $x, y \in Q$, $x$ is covered by $y$ in
$Q$ implies that $x$ is covered by $y$ in $P$.   Given a saturated $Q \subseteq P$ with minimum $0_Q$ and
maximum $1_Q$, say that  $Q$ is {\it symmetric} in $P$ if $r_P( 0_Q ) + r_P(1_Q) = r_P(P).$
\noindent\footnote{As is customary, we define ``symmetric" only for saturated subposets.  However, we usually use the phrase
``saturated and symmetric", or some variant, to emphasize the two aspects of the property.}

\begin{lemma}\label{l:partition}
Let $P$ be a partially ordered sets and let $P_1, P_2, \ldots, P_m$ be saturated, symmetric subsets of $P$.  If
$P = \sum_{i = 1}^m P_i$ and each $P_i$ is an SCO then $P$ is an SCO.
\end{lemma}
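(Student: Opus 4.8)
The plan is to take a symmetric chain decomposition $\mathcal{D}_i$ of each $P_i$ (one exists since each $P_i$ is an SCO) and show that $\mathcal{D} = \bigcup_{i=1}^m \mathcal{D}_i$ is a symmetric chain decomposition of $P$. Because $P = \sum_{i=1}^m P_i$ is a partition of the underlying set and each $\mathcal{D}_i$ partitions $P_i$ into chains, $\mathcal{D}$ is automatically a partition of $P$ into chains; so the whole content is to check that each member $C$ of some $\mathcal{D}_i$ is, as a chain in $P$, both saturated in $P$ and symmetric in $P$. (Note that $P$ is ranked: this is implicit in the hypothesis, since \emph{saturated} and \emph{symmetric} subsets are defined only relative to a ranked $P$; hence $P$ being an SCO will follow once $\mathcal{D}$ is exhibited as an SCD.)

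Saturation is immediate. A chain $C \in \mathcal{D}_i$ is saturated in $P_i$, so its consecutive elements form covering pairs in $P_i$; since $P_i$ is a saturated subposet of $P$, each such pair is also a covering pair in $P$, whence $C$ is saturated in $P$.

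The only step needing care is a rank-comparison identity: for each $i$ and each $x \in P_i$,
$$r_P(x) = r_{P_i}(x) + r_P(0_{P_i}).$$
To prove it, recall the standard fact about a ranked order: if $u \le v$ then every saturated chain from $u$ to $v$ has length $r(v) - r(u)$, obtained by prolonging such a chain down to the minimum and up to a maximal element and invoking the common length of all maximal chains. Applying this inside $P$ with $u = 0_{P_i}$ and $v = x$, every saturated chain of $P$ from $0_{P_i}$ to $x$ has length $r_P(x) - r_P(0_{P_i})$. On the other hand, a saturated chain of $P_i$ from $0_{P_i}$ to $x$ has length $r_{P_i}(x)$ (as $P_i$, being an SCO, is ranked with minimum $0_{P_i}$), and it is also saturated in $P$ because $P_i$ is a saturated subposet of $P$; comparing lengths gives the identity. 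In particular, taking $x = 1_{P_i}$ yields $r_P(1_{P_i}) = r(P_i) + r_P(0_{P_i})$.

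It remains to verify symmetry of the chains. Combining the identity with the fact that $P_i$ is symmetric in $P$,
$$r(P) = r_P(0_{P_i}) + r_P(1_{P_i}) = r(P_i) + 2\,r_P(0_{P_i}).$$
If $C \in \mathcal{D}_i$ has minimum $0_C$ and maximum $1_C$, then $C$ is symmetric in $P_i$, so $r_{P_i}(0_C) + r_{P_i}(1_C) = r(P_i)$, and applying the identity to $0_C$ and to $1_C$,
$$r_P(0_C) + r_P(1_C) = \bigl(r_{P_i}(0_C) + r_{P_i}(1_C)\bigr) + 2\,r_P(0_{P_i}) = r(P_i) + 2\,r_P(0_{P_i}) = r(P).$$
Hence $C$ is symmetric in $P$, so $\mathcal{D}$ is an SCD of $P$ and $P$ is an SCO. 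The crux is the rank-comparison identity, and there the essential input is precisely that each $P_i$ is a \emph{saturated} subposet of $P$ (so that saturated chains inside $P_i$ remain saturated in $P$); the symmetry of the $P_i$ in $P$ is then exactly what ties the endpoint ranks to the global rank $r(P)$.
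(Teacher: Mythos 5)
Your proof is correct and follows the same approach as the paper: take the union of the SCDs of the $P_i$ and check that each chain remains saturated and symmetric in $P$. The paper asserts this last step in one line, whereas you supply the rank-comparison identity $r_P(x) = r_{P_i}(x) + r_P(0_{P_i})$ that justifies it; the underlying argument is the same.
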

\begin{proof}
Each chain $C$ in an SCD of a $P_i$ is a symmetric, saturated chain in $P_i$ and each $P_i$ is symmetric and saturated in $P$.
Hence, $C$ is symmetric and saturated in $P$.  Thus, the set of all these chains, over all $i$, provides an SCD of $P$.
\end{proof}

For all $X \subseteq [n]$, let $X_r = X \cap N_r$, $r = 1, 2, \ldots, t$ (see (\ref{e:partition})).
With the notation in (\ref{e:phi}), let $K'$ be the subgroup of $G$ consisting of all $\phi$ for which $\tau$ is the
identity on $[t]$.  This is 
the base group of
the wreath product and is isomorphic to $K^t$.   Let $K_r$ denote the copy of $K$ acting on $N_r$ and for any set
$N$, let $B(N)$ denote the Boolean lattice of all subsets of $N$.  Then\\[-.6cm]
\begin{equation}\label{e:inside}
B_n/K' \ \cong \ \prod_{r = 1}^t B(N_r)/K_r \ \cong \ (B_k/K)^t .
\end{equation}
The first isomorphism, say $F$, in (\ref{e:inside}) is the map $[X]_{K'} \xrightarrow{F}  ([X_1]_{K_1}, [X_2]_{K_2}, \ldots, [X_t]_{K_t})$, where
$[X]_{K'}$ and $[X_r]_{K_r}$ denote the orbit of $X$ under $K'$ and the orbit of $X_r$ under $K_r$, respectively.

Here is the property of the subgroup $K$ of $S_k$ needed in our proof:\\

\centerline{[{\bf K}]:   $B_k / K$ has an SCD.}

With an SCD guaranteed by [{\bf K}], we now derive the required property of $T$.  

Suppose that $C_1, C_2, \ldots, C_s$ constitute an SCD of $B_k/K$ and that $C_1^r, C_2^r, \ldots, C_s^r$ is the 
corresponding SCD of $B(N_r)/K_r$, $r = 1, 2, \ldots, t$.   Then these SCDs define a partition of the product $\prod_{r=1}^t B(N_r)/K_r$ into {\it grids}, that is, products of chains:
\begin{equation}\label{e:grids}
\prod_{r = 1}^t B(N_r)/K_r \ = \ \prod_{r = 1}^t \left( \sum_{j = 1}^s C_j^r \right) \ = \!
\sum_{\overline{j} = (j_1, j_2, \ldots, j_t ) \in [s]^t} C_{j_1}^1 \times C_{j_2}^2 \ldots \times C_{j_t}^t .
\end{equation}
Note that each grid $C(\overline{j}) =  C_{j_1}^1 \times C_{j_2}^2 \ldots \times C_{j_t}^t$ is itself an SCO 
(\cite {BTK}) and
is a symmetric, saturated subset of  $\prod_{r = 1}^t B(N_r)/K_r$.   Following Lemma \ref{l:partition}, the family of all chains from the SCDs of the $C(\overline{j})$s
and the first isomorphism in (\ref{e:inside}) yield an SCD for $B_n/K' $.

The group $T$ acts on $[s]^t$ as follows: for $\tau \in T$ and $\overline{j} = (j_1, j_2, \ldots, j_t ) \in [s]^t$,
$$\tau(\overline{j}) = (j_{\tau^{-1}(1)}, j_{\tau^{-1}(2)}, \ldots, j_{\tau^{-1}(t)} ).$$  
Let $T_{\overline{j}}$ denote the stabilizer of ${\overline{j}}$ -- the subgroup of all $\tau \in T$ such that 
$\tau(\overline{j}) = {\overline{j}}$.  Of course, each $\tau \in T$ also induces a permutation of the family of all $C(\overline{j})$, 
$\overline{j} \in [s]^t$, via $C(\overline{j}) \to C(\tau(\overline{j}))$.    In fact, each $\tau \in T$ induces ``local isomorphisms"
among the grids $C(\overline{j})$, $\overline{j} \in [s]^t$, as follows.

Given $X \subseteq [k] \times [t]$ and $X_r = X \cap N_r$, let $X_{r,p} = \{ (i, p) | (i, r) \in X_r \}$.  Thus, $X_r$ and $X_{r,p}$ 
have the same first projection into $[k]$.  Also, for brevity, let $[X_r]$ replace $[X_r]_{K_r}$, let $[X_{r,p}]$ replace $[X_{r,p}]_{K_p}$,
and let $\overline{X} =  ([X_1], [X_2], \ldots , [X_t])$.

Define $\widehat{\tau}: C(\overline{j}) \to C(\tau(\overline{j}))$ by
\begin{equation}\label{e:tau}
\widehat{\tau} ([X_1], [X_2], \ldots , [X_t]) = ([X_{{\tau}^{-1}(1), 1}], [X_{{\tau}^{-1}(2), 2}], \ldots, [X_{{\tau}^{-1}(t), t}]) .
\end{equation}
Then $\widehat{\tau}$ is well-defined and is an order isomorphism because:
\begin{align*}
\widehat{\tau}(\overline{X}) = \widehat{\tau} ([X_1], [X_2], \ldots , [X_t]) \le \widehat{\tau} ([Y_1], [Y_2], \ldots , [Y_t]) \ \text{in} \ C(\tau(\overline{j})) &\iff \\
[X_{{\tau}^{-1}(r), r}] \le [Y_{{\tau}^{-1}(r), r}] \ \text{in} \  C_{j_{\tau^{-1} (r)}}^r  \ (r = 1, 2, \ldots, t) &\iff \\
[X_{{\tau}^{-1}(r)}] \le [Y_{{\tau}^{-1}(r)}]  \ \text{in} \ C_{j_{\tau^{-1} (r)}}^{\tau^{-1} (r)}  \ (r = 1, 2, \ldots, t) &\iff \\
 ([X_1], [X_2], \ldots , [X_t]) \le ([Y_1], [Y_2], \ldots , [Y_t]) \ \text{in} \  C(\overline{j}).
\end{align*}

Whenever $\tau \in T_{\overline{j}}$,  $\widehat{\tau}: C(\overline{j}) \to C(\overline{j})$ is an automorphism of
 $C(\overline{j})$.
 
Finally, we can state the property of $T$ we need.

[{\bf T}]:  With the SCD $C_1, C_2, \ldots, C_s$ of $B(k)/K$, for each $\overline{j} \in [s]^t$, 
$C(\overline{j})/T_{\overline{j}}$ is an SCO.

\subsection{Proof of Theorem \ref{T:main} from [K] and [T]}\label{ss:sufficiency}
In this subsection, we assume that the subgroups $K$ and $T$ of $S_k$ and $S_t$, respectively, satisfy [{\bf K}] and [{\bf T}].
We have the partition of $\prod_{r = 1}^t B(N_r)/K_r$ into symmetric, saturated grids  $C(\overline{j}), \ \overline{j} \in  [s]^t$, given
in (\ref{e:grids}) and know that the union of the SCDs of these grids gives an SCD of  $\prod_{r = 1}^t B(N_r)/K_r$.

Select a representative from each orbit in $[s]^t$ under $T$, say the lexicographically least, and let $J$ be the set of these
representatives.  The family $\{ C(\overline{j}) \ | \ \overline{j} \in J \}$ seems to be a promising source of an SCD of $B_n/G$.
Indeed, for $X = \cup_{r = 1}^t X_r \subseteq [n]$, each $X_r$ determines an index $j_r$ via $[X_r] \in C_{j_r}^r$ in the SCD of
$B(N_r)/K_r$.   There is a unique $\overline{j} \in [s]^t$ such that $\overline {X} = ([X_1], [X_2], \ldots , [X_t]) \in C(\overline{j})$.   Thus, there is
exactly one $\overline{j'} \in J$ such that $\widehat{\tau} (\overline{X}) \in C(\overline{j'})$ for some $\tau \in T$.

Creating an SCD for $B_n/G$ begins with verifying this claim.

\begin{claim}\label{cl:bijection}
Let $\Phi: B_n/G \to \sum_{\overline{j} \in J} C(\overline{j})/T_{\overline{j}}$ be defined by
$$\Phi([Y]_G) = [\overline{X}]_{T_{\overline{j}}}$$
where $X \in [Y]_G$ and $\overline{X} \in C(\overline{j})$ for some $\overline{j} \in J$.  Then $\Phi$ is a bijection.
\end{claim}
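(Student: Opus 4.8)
The plan is to realize $\Phi$ as a composition of two natural bijections, which at once shows that $\Phi$ is well-defined (independent of the choices of $X$ and $\overline{j}$) and bijective; since the Claim concerns only bijectivity, I will work throughout with underlying sets. First I would reduce $B_n/G$ to a $T$-quotient of the product $\prod_{r=1}^t B(N_r)/K_r$. The base group $K'$ is normal in $G=K\wr T$ with quotient map $\phi\mapsto\tau$ (notation of (\ref{e:phi})), so $G$ acts on the set $B_n/K'$ of $K'$-orbits with $K'$ in the kernel, and the usual description of the orbits of a quotient group gives a bijection $B_n/G\to(B_n/K')/T$, $[X]_G\mapsto$ (the $T$-orbit of $[X]_{K'}$). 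Composing with the isomorphism $F$ of (\ref{e:inside}) yields a bijection $\beta$ from $B_n/G$ onto the set of $T$-orbits on $\prod_{r=1}^t B(N_r)/K_r$, for the action of $T$ transported along $F$. The one computation I would actually do here is to identify that transported action: for $\phi\in G$ with $T$-part $\tau$ and any $X\subseteq[n]$, formula (\ref{e:phi}) shows that $(\phi(X))_p$ and $X_{\tau^{-1}(p),p}$ have the same projection into $[k]$, hence lie in the same $K_p$-orbit for every $p$, so $F([\phi(X)]_{K'})=\widehat{\tau}(F([X]_{K'}))$ with $\widehat{\tau}$ as in (\ref{e:tau}). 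Thus the transported action depends only on $\tau$, it permutes the grids $C(\overline{j})$ according to the $T$-action on $[s]^t$, and on each grid it restricts to the order isomorphism $\widehat{\tau}: C(\overline{j})\to C(\tau(\overline{j}))$ established before the statement.

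Next I would carry out the orbit-counting argument over the partition (\ref{e:grids}): since $T$ permutes the grids $C(\overline{j})$, $\overline{j}\in[s]^t$, and $J$ is a transversal of the $T$-orbits on $[s]^t$, the $T$-orbits on $\prod_{r=1}^t B(N_r)/K_r$ correspond to the disjoint union over $\overline{j}\in J$ of the sets of $T_{\overline{j}}$-orbits on $C(\overline{j})$, that is, to $\sum_{\overline{j}\in J} C(\overline{j})/T_{\overline{j}}$. Concretely, for a $T$-orbit $O$ pick $\overline{X}\in O$ in its unique grid $C(\overline{k})$; because $\tau$ carries $C(\overline{k})$ onto $C(\tau(\overline{k}))$ and distinct grids are disjoint, $O$ meets exactly the grids indexed by the $T$-orbit of $\overline{k}$ in $[s]^t$, hence exactly one grid $C(\overline{j})$ with $\overline{j}\in J$; moreover, since $\widehat{\tau}(\overline{Y})\in C(\tau(\overline{j}))$ for $\overline{Y}\in C(\overline{j})$ and the grids are disjoint, the set $O\cap C(\overline{j})$ is precisely the $T_{\overline{j}}$-orbit of any of its points. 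So $\gamma$, sending a $T$-orbit $O$ to the class of $O\cap C(\overline{j})$ in $C(\overline{j})/T_{\overline{j}}$, is well-defined, with inverse sending a $T_{\overline{j}}$-orbit $O'\subseteq C(\overline{j})$ to the $T$-orbit of its points (consistent because $O'$ sits inside one $T$-orbit, and $(T\cdot O')\cap C(\overline{j})=O'$ again by disjointness of the grids).

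Finally I would unwind $\gamma\circ\beta$ and check it equals $\Phi$: starting from $[Y]_G$ and $X\in[Y]_G$, set $\overline{X}=F([X]_{K'})$; its $T$-orbit meets exactly one $J$-grid $C(\overline{j})$ and does so in a single $T_{\overline{j}}$-orbit, namely $[\overline{X'}]_{T_{\overline{j}}}$ where $\overline{X'}=F([X']_{K'})$ for a representative $X'\in[Y]_G$ with $\overline{X'}\in C(\overline{j})$, and this $T_{\overline{j}}$-orbit is exactly $\gamma(\beta([Y]_G))$, which is $\Phi([Y]_G)$ by the defining formula. Hence $\Phi=\gamma\circ\beta$ is a well-defined bijection. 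I expect the only genuinely delicate point to be the transport computation in the first paragraph, verifying that quotienting $B_n$ first by $K'$ and then by $T\cong G/K'$ reassembles the full $G$-action through the maps $\widehat{\tau}$; everything after that is the routine orbit-counting-over-a-partition principle, from which bijectivity of $\Phi$ follows.
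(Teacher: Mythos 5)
Your proof is correct, but it organizes the argument differently from the paper's. The paper proves the single pointwise equivalence (\ref{e:mainpoint}) --- $\phi(U)=V$ for some $\phi\in G$ if and only if $\widehat{\tau}(\overline{U})=\overline{V}$ for some $\tau\in T$ --- by direct computation with (\ref{e:phi}), and then checks well-definedness, injectivity and surjectivity of $\Phi$ by hand from that equivalence. You instead factor $\Phi=\gamma\circ\beta$: first $B_n/G\cong(B_n/K')/T$ via normality of the base group $K'$, transported along $F$ to a $T$-action on $\prod_{r=1}^t B(N_r)/K_r$ given by the maps $\widehat{\tau}$; then the standard correspondence between the orbits of a group permuting the blocks of a partition and the stabilizer-orbits on a transversal of blocks. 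The computational core is the same in both proofs --- your equivariance identity $F([\phi(X)]_{K'})=\widehat{\tau}(F([X]_{K'}))$ carries exactly the content of (\ref{e:mainpoint}) --- but your packaging lets well-definedness and bijectivity fall out of general facts about group actions rather than being verified element by element, and it makes transparent why the stabilizers $T_{\overline{j}}$ appear. One small wording slip to repair: $(\phi(X))_p$ and $X_{\tau^{-1}(p),p}$ do not in general have the \emph{same} projection into $[k]$; in the notation $W_r'=\{\,i\in[k]\mid (i,r)\in W_r\,\}$ used in the paper's proof, their projections are $\rho_{\tau^{-1}(p)}(X_{\tau^{-1}(p)}')$ and $X_{\tau^{-1}(p)}'$, which differ by the element $\rho_{\tau^{-1}(p)}\in K$. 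That is still exactly what you need for the conclusion you actually draw --- the two sets lie in the same $K_p$-orbit --- so the argument stands once the phrase is corrected.
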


\begin{proof}
Let's begin by verifying the following equivalence:   for all $U, V \subseteq [k] \times [t]$,
\begin{equation}\label{e:mainpoint}
\phi(U) = V \ \text{for some} \ \phi \in G \ \text{if and only if} \ \widehat{\tau}(\overline{U}) = \overline{V}  \ \text{for some} \ \tau \in T .
\end{equation}

Suppose that $\phi \in G$ corresponds to $\overline{\rho} \in K^t$ and $\tau \in T$ as in (\ref{e:phi}).  We need a description of
$\phi(W)$ for any $W \subseteq [k] \times [t]$: with $W_r' = \{ i \in [k] | (i,r) \in W_r \}$, 
$$\phi(W) \ = \ \bigcup_{r = 1}^t \left( \rho_{\tau^{-1}(r )}( W_{\tau^{-1}(r)}') \times \{ r \} \right) .$$
With this notation we have
\begin{align*}
\phi(U) = V \ &\iff \bigcup_{r = 1}^t \left( \rho_{\tau^{-1}(r )}( U_{\tau^{-1}(r)}') \times \{ r \} \right) = \bigcup_{r = 1}^t V_r \\
&\iff  \ \rho_{\tau^{-1} (r)}(U_{{\tau}^{-1}(r)}')   \times \{ r \} = V_r , \ r = 1, 2, \ldots, t \\
&\iff  \ [U_{\tau^{-1}(r), r}] = [V_r],   \ r  = 1, 2, \ldots, t\\
&\iff \  \widehat{\tau}(\overline{U}) = \overline{V} .
\end{align*}
The first equivalence is from the description of the wreath product in (\ref{e:phi}).  The second is obvious.  The
third is the definition of the $K_r$-orbit.  The last is the definition of $\widehat{\tau}$ in (\ref{e:tau}).

Let us see that $\Phi$ is well-defined and injective.  Suppose that $X$ and $Y$ are as in the statement of the claim
and that $V \in [U]_G$, $V \in C_{\overline{j'}}$ for some $\overline{j'} \in J$ and $\Phi([U]_G) = [\overline{V}]_{T_{\overline{j'}}}$.
Suppose that $[U]_G = [Y]_G$.
Then $\phi(X) = V$ for some $\phi \in G$ so, by (\ref{e:mainpoint}), $\widehat{\tau}(\overline{X}) = 
\overline{V}$ for some $\tau \in T$.  Then $\tau(\overline{j}) = \overline{j'}$.  Since $\overline{j},\overline{j'} \in J$, a system of representatives
of the $T$-orbits in $[s]^t$, $\overline{j} = \overline{j'}$ and $\tau \in T_{\overline{j}}$.  Hence, $ [\overline{X}]_{T_{\overline{j}}} =
 [\overline{V}]_{T_{\overline{j}}}$, that is, $\Phi([U]_G) = \Phi([Y]_G).$  Thus, $\Phi$ is well-defined.  The converse is easily argued,
 also based on (\ref{e:mainpoint}), establishing that $\Phi$ is injective.
 
 It is obvious that $\Phi$ is surjective: any element of  $\sum_{\overline{j} \in J} C(\overline{j})/T_{\overline{j}}$ is the form
 $[\overline{X}]_{T_{\overline{j}}}$, for some $\overline{X} \in C({\overline{j}})$, ${\overline{j}} \in J$.  Then $X \subseteq [k] \times [t]$
 and $\Phi([X]_G) = [\overline{X}]_{T_{\overline{j}}}$.  \end{proof} 
 
 We began this subsection assuming that 
 
 [{\bf T}]:  With the SCD $C_1, C_2, \ldots, C_s$ of $B(k)/K$, for each $\overline{j} \in [s]^t$, 
$C(\overline{j})/T_{\overline{j}}$ is an SCO.

Let  $\overline{j} \in J$ and let $\mathcal{C}$ be a symmetric chain in an SCO of $C(\overline{j})/T_{\overline{j}}$.

\begin{claim}\label{cl:quotient}
The inverse image $\Phi^{-1} (\mathcal{C})$ of $\mathcal{C}$ under $\Phi$ is a symmetric, saturated chain in $B_n/G$.
\end{claim}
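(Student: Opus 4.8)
The plan is to transfer the symmetry and saturation of $\mathcal{C}$ from $C(\overline{j})/T_{\overline{j}}$ back through the bijection $\Phi$, by first understanding $\Phi$ as a composition of structure-preserving maps. Recall the chain of identifications: $B_n/K' \cong \prod_{r=1}^t B(N_r)/K_r$ via the isomorphism $F$ of (\ref{e:inside}), and the right-hand side is partitioned into the symmetric, saturated grids $C(\overline{j})$, $\overline{j} \in [s]^t$, by (\ref{e:grids}). The map $\Phi$ factors conceptually as: pass from $[Y]_G$ down to the $K'$-orbit level (choosing the representative $X$ landing in some $C(\overline{j})$, $\overline{j} \in J$), then quotient by the residual action of $T_{\overline{j}}$ on that grid. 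So $\Phi^{-1}(\mathcal{C})$, as a subset of $B_n/G$, corresponds under this factorization to the $K'$-orbits $[X]_{K'}$ with $\overline{X}$ lying in the preimage $\pi^{-1}(\mathcal{C})$, where $\pi: C(\overline{j}) \to C(\overline{j})/T_{\overline{j}}$ is the quotient map; then we push back up via $F^{-1}$.

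First I would establish saturation. Since $C(\overline{j})$ is a \emph{saturated} subposet of $\prod_{r=1}^t B(N_r)/K_r$, and $\mathcal{C}$ is a saturated chain in the quotient $C(\overline{j})/T_{\overline{j}}$, the point is that the quotient map $\pi: C(\overline{j}) \to C(\overline{j})/T_{\overline{j}}$ is a rank-preserving covering map in the appropriate sense: a cover in the quotient lifts to a cover in $C(\overline{j})$. This is the standard fact that for an automorphism group action on a ranked poset the natural quotient preserves the rank function up to the constant 0, so saturated chains pull back to saturated chains. Combined with $C(\overline{j})$ saturated in the product and $F$ an isomorphism (hence rank-preserving after the $K'$-quotient is also rank-preserving in $B_n$, since $K' \le \Au(B_n)$), I get that $\Phi^{-1}(\mathcal{C})$ is saturated in $B_n/G$. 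I would spell out the single covering step: if $[\overline{X}]_{T_{\overline{j}}}$ is covered by $[\overline{Y}]_{T_{\overline{j}}}$ in $C(\overline{j})/T_{\overline{j}}$, then choosing representatives appropriately $\overline{X}$ is covered by $\overline{Y}$ in $C(\overline{j})$, hence (saturation of the grid) in the product, hence — transporting by $F^{-1}$ and using that the $K'$-quotient map $B_n \to B_n/K'$ is rank-preserving — $[X]_{K'}$ is covered by $[Y]_{K'}$ in $B_n/K'$; finally $[X]_G$ is covered by $[Y]_G$ in $B_n/G$ because the further quotient by $T$ is again rank-preserving and $\Phi$ respects the order. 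Chaining these, $\Phi^{-1}(\mathcal{C})$ is a saturated chain.

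Next, symmetry. I want $r_{B_n/G}$ of the bottom of $\Phi^{-1}(\mathcal{C})$ plus $r_{B_n/G}$ of the top to equal $r(B_n/G) = n$. The key observation is that every map in the chain $B_n \to B_n/K' \cong \prod B(N_r)/K_r \supseteq C(\overline{j}) \to C(\overline{j})/T_{\overline{j}}$ is rank-preserving, and each of $B_n$, $B_n/G$, $C(\overline{j})$, $C(\overline{j})/T_{\overline{j}}$ has rank $n$: the grid $C(\overline{j})$ is symmetric and saturated in the rank-$n$ product, so $r(C(\overline{j})) = n$, and taking the automorphism quotient $C(\overline{j})/T_{\overline{j}}$ preserves the top and bottom ranks, so it too has rank $n$; likewise $r(B_n/G) = n$. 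Now if $\mathcal{C}$ is a symmetric chain in the SCO of $C(\overline{j})/T_{\overline{j}}$, its endpoints have ranks summing to $n$; transporting endpoints back through $\Phi^{-1}$ preserves these ranks (every step is rank-preserving), so the endpoints of $\Phi^{-1}(\mathcal{C})$ in $B_n/G$ have ranks summing to $n = r(B_n/G)$. Hence $\Phi^{-1}(\mathcal{C})$ is symmetric.

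The main obstacle is the bookkeeping around ``rank-preserving'': I must justify once, cleanly, that for a ranked poset $P$ with $G \le \Au(P)$ and all maximal chains of $P$ the same length, the quotient $P/G$ is ranked with $r_{P/G}([x]) = r_P(x)$, and that covers in $P/G$ lift to covers in $P$. This is where care is needed, since $P/G$ for general $P$ need not even be ranked — but here at each stage $P$ is a product of chains or a grid or a Boolean lattice, all of which are ranked with all maximal chains of equal length, and the acting group consists of automorphisms, so the standard argument (a maximal chain in $P/G$ lifts to a maximal chain in $P$, using that automorphisms permute the rank levels) applies. Once that lemma is in hand, both saturation and symmetry of $\Phi^{-1}(\mathcal{C})$ follow by the transport described above, and together with Claim~\ref{cl:bijection} and Lemma~\ref{l:partition} this will complete the proof of Theorem~\ref{T:main}.
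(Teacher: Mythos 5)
Your overall strategy---transporting rank and order information from $\mathcal{C}$ back through the identifications $C(\overline{j})/T_{\overline{j}} \leftarrow C(\overline{j}) \hookrightarrow \prod_{r=1}^t B(N_r)/K_r \cong B_n/K' \to B_n/G$---is the same as the paper's, and your treatment of saturation and of order-preservation (replacing $\overline{Y}$ by a representative $\widehat{\tau}(\overline{Y})$ with $\tau \in T_{\overline{j}}$, so that the corresponding subsets lie in the same $G$-orbit) is essentially correct in outline. The genuine gap is in the symmetry argument. You assert that because $C(\overline{j})$ is symmetric and saturated in the rank-$n$ product, $r(C(\overline{j})) = n$. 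That is false: $C(\overline{j}) = C_{j_1}^1 \times \cdots \times C_{j_t}^t$ is a product of symmetric chains, and a symmetric chain in the rank-$k$ poset $B(N_r)/K_r$ runs from some rank $a$ to rank $k-a$ and so has length $k - 2a$, which is typically less than $k$ (already in $B_2$ the Greene--Kleitman chain consisting of the single set $\{2\}$ has length $0$). Hence $r(C(\overline{j})) = n_{\overline{j}}$, which is strictly less than $n$ in general; ``symmetric in the product'' says only that the bottom and top ranks of the grid sum to $n$, not that the grid meets every rank level.

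Consequently the inclusion $C(\overline{j}) \hookrightarrow \prod_{r=1}^t B(N_r)/K_r$ is not rank-preserving in the sense you need: it shifts the intrinsic rank of the grid by the constant $\tfrac{1}{2}(n - n_{\overline{j}})$, the rank of $0_{C(\overline{j})}$ in the product. The endpoints of $\mathcal{C}$ have intrinsic ranks summing to $n_{\overline{j}}$, not to $n$, so ``every step is rank-preserving, hence the endpoint ranks still sum to $n$'' does not follow as written. The missing step is exactly the offset computation the paper performs: if the intrinsic ranks of $\mathcal{C}$ run over $k, k+1, \ldots, n_{\overline{j}} - k$, then the ranks of $\Phi^{-1}(\mathcal{C})$ in $B_n/G$ run over the consecutive integers from $\tfrac{1}{2}(n - n_{\overline{j}}) + k$ up to $\tfrac{1}{2}(n - n_{\overline{j}}) + n_{\overline{j}} - k = n - \bigl(\tfrac{1}{2}(n - n_{\overline{j}}) + k\bigr)$, a symmetric interval; this simultaneously yields symmetry and (together with order-preservation) saturation. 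With that correction inserted, your proof coincides with the paper's.
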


\begin{proof}
Observe that for all ranked orders $P$, and all subgroups $G$ of $\Au(P)$, and all $x \in P$, $r_{P/G} ([x]_G) = r_P(x)$. Thus,
for all $[\overline{X}]_{T_{\overline{j}}} \in C(\overline{j})/T_{\overline{j}}$,
$$r_{C(\overline{j})/T_{\overline{j}}} ([\overline{X}]_{T_{\overline{j}}}) = r_{C(\overline{j})} (\overline{X}) \ \text {and} \  r_{B_n/K'} ([X]_{K'}) = r_{B_n/G} ([X]_{G}) .$$
Recall $C(\overline{j})$ is a symmetric, saturated subset of $\prod_{r = 1}^t B(N_r)/K_r \cong B(n)/K'$ and that $r_{B_n/K'} = r(B_n) = n$.  If $r_{C(\overline{j})} = n_{\overline{j}}$ then we have 
$$ r_{B_n/K'} ([X]_{K'}) = 1/2(n - n_{\overline{j}}) + r_{C(\overline{j})} (\overline{X}).$$
Since $\mathcal{C}$ is a symmetric saturated chain in an SCD of $C_{\overline{j}}/T_{\overline{j}}$, the ranks of its elements form an interval
$k, k+1, \dots, n_{\overline{j}} - k$, for some $k$.  Thus, the ranks of the elements of  $\Phi^{-1} (\mathcal{C})$ in $B_n/G$ form the interval
$$1/2(n - n_{\overline{j}}) + k, 1/2(n - n_{\overline{j}}) + k + 1, \ldots, 1/2(n - n_{\overline{j}}) +  n_{\overline{j}} - k = n - (1/2(n - n_{\overline{j}}) + k),$$
a symmetric interval of ranks in $B_n/G$.

Suppose that $[\overline{X}]_{T_{\overline{j}}} \le [\overline{Y}]_{T_{\overline{j}}}$ in $\mathcal{C}$.  Then there is some $\overline{Y'} = 
\widehat{\tau}(\overline{Y})$, for some $\tau \in T_{\overline{j}}$ such that $\overline{X} \le \overline{Y'}$ in $\prod_{r = 1}^t B(N_r)/K_r$.
Then $[X]_{K'} \le [Y']_{K'}$ and, so,  $[X]_{G} \le [Y']_{G} = [Y]_G$.  Hence,  $\Phi^{-1} (\mathcal{C})$ is a symmetric saturated chain in
$B_n/G$.  \end{proof}

The collection of the symmetric, saturated chains $\Phi^{-1} (\mathcal{C})$, over all $\mathcal{C}$ in the given SCDs of  all $C(\overline{j})/T_{\overline{j}}$, $\overline{j} \in J$, provides an SCD of $B_n/G$.

\subsection{Powers of disjoint cycles suffice}\label{ss:powers}

Let's see that if both $K$ and $T$ are generated by powers of disjoint cycles, in $S_k$ and $S_t$, respectively, then both {\bf [K]} and
{\bf [T]} hold.   We use this result from \cite{DMT} (Corollary 7) and \cite{D}.

\begin{lemma}\label{l:cycles}
Let $P$ be a product of chains and let $K$ be a subgroup of $\Au(P)$ that is generated by powers of disjoint cycles.  Then $P/K$ is an SCO.
\end{lemma}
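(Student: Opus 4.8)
The plan is to reduce the statement to Dhand's theorem \cite{D} --- that a power of an SCO modulo the cyclic group that rotates its coordinates is again an SCO --- together with the classical fact that a product of SCOs is an SCO \cite{K}. The first step decomposes $P$ along the supports of the generating cycles. Write $P=\prod_i C_i^{n_i}$ with the $C_i$ pairwise non-isomorphic chains, so that $\Au(P)$ consists precisely of the coordinate permutations that preserve chain lengths, and write $K=\langle c_1^{a_1},\dots,c_q^{a_q}\rangle$ where $c_1,\dots,c_q$ are cycles with pairwise disjoint supports $S_1,\dots,S_q$ inside the coordinate set $N$ of $P$. Since each $c_j$ lies in $\Au(P)$, the chains at the coordinates of $S_j$ are all isomorphic to a single chain $Q_j$, so the subproduct $P_j$ of $P$ on $S_j$ equals $Q_j^{\,|S_j|}$, with $c_j$ acting as the $|S_j|$-cycle. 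Put $S_0=N\setminus(S_1\cup\dots\cup S_q)$ and let $P_0$ be the subproduct on $S_0$. Disjointness of the supports makes the $c_j^{a_j}$ pairwise commuting permutations with pairwise disjoint supports, so $K=\prod_{j=1}^{q}\langle c_j^{a_j}\rangle$ acting coordinatewise, and a routine check yields an order isomorphism $P/K\cong P_0\times\prod_{j=1}^{q}\bigl(P_j/\langle c_j^{a_j}\rangle\bigr)$. Since $P_0$ is a product of chains, it is an SCO; hence by \cite{K} it suffices to prove that each $P_j/\langle c_j^{a_j}\rangle$ is an SCO.

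The second step treats a single such factor, that is, a quotient $Q^{\ell}/\langle c^{a}\rangle$ where $Q$ is a chain and $c$ is the $\ell$-cycle. Identify the $\ell$ coordinates with $\Z/\ell\Z$ so that $c$ is the shift $x\mapsto x+1$; then $\langle c^{a}\rangle=\langle c^{d}\rangle$ with $d=\gcd(a,\ell)$, a cyclic group of order $e:=\ell/d$. Writing each coordinate uniquely as $x=i+dj$ with $0\le i<d$ and $0\le j<e$, the permutation $c^{d}$ fixes $i$ and sends $j\mapsto j+1\pmod e$. Grouping the coordinates by the value of $j$ exhibits an isomorphism $Q^{\ell}\cong(Q^{d})^{e}$ under which $c^{d}$ becomes the $e$-cycle that rotates the $e$ copies of $Q^{d}$. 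Since $Q^{d}$ is a product of chains, hence an SCO, Dhand's theorem \cite{D} gives that $(Q^{d})^{e}/\langle c^{d}\rangle$ is an SCO, which completes the argument.

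I expect the only real work to be bookkeeping: checking that the quotient of a product by a product group is order-isomorphic to the product of the quotients, and that the re-indexing $Q^{\ell}\cong(Q^{d})^{e}$ genuinely intertwines $c^{d}$ with a coordinate rotation; neither point is deep. One could bypass the second step entirely by quoting Corollary~7 of \cite{DMT}, which records exactly this statement for $P=C^{n}$, and combining it with the product decomposition of the first step --- indeed the lemma is precisely the result cited above from \cite{DMT} and \cite{D}.
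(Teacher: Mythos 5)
The paper offers no proof of this lemma at all: it is quoted verbatim from \cite{DMT} (Corollary 7) and \cite{D}. So your derivation from Dhand's theorem together with Katona's product theorem is exactly the kind of reconstruction one would want, and its overall architecture --- split off the coordinates fixed by $K$, factor the quotient over the pairwise disjoint supports, then reduce each factor to a cyclic rotation of a power of an SCO --- is sound, as is the bookkeeping you defer (quotient of a product by a componentwise product group is the product of the quotients, and the re-indexing $x = i + dj$ does intertwine $c^d$ with a block rotation).

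There is, however, one assertion that is false as written and would fail on concrete instances: ``Since each $c_j$ lies in $\Au(P)$, the chains at the coordinates of $S_j$ are all isomorphic to a single chain $Q_j$.'' The hypothesis only places the generators $c_j^{a_j}$ in $\Au(P)$, not the underlying cycles $c_j$. Take $P = C \times C' \times C \times C'$ with $C \not\cong C'$ and $K = \langle (1\,2\,3\,4)^2 \rangle = \langle (1\,3)(2\,4) \rangle$: then $K \le \Au(P)$ and $K$ is generated by a power of the $4$-cycle, but the $4$-cycle itself is not an automorphism and the four chains are not all isomorphic. This situation actually arises in the paper's own use of the lemma, which invokes it for stabilizers $T_{\overline{j}}$ acting on grids $C(\overline{j})$ whose chain factors have different lengths. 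Fortunately the repair is already contained in your second step: with $d = \gcd(a,\ell)$ and $e = \ell/d$, the orbits of $c^{d}$ on the coordinate set $\Z/\ell\Z$ are the congruence classes modulo $d$, so the chain is constant on each class, say $Q_0, Q_1, \ldots, Q_{d-1}$, but these need not coincide with one another. Grouping the coordinates into the blocks $\{\,i + dj : 0 \le i < d\,\}$ identifies the factor with $R^{e}$ for $R = Q_0 \times Q_1 \times \cdots \times Q_{d-1}$, with $c^{d}$ rotating the $e$ copies of $R$; since $R$ is a product of chains it is an SCO, and Dhand's theorem applies exactly as you apply it to $(Q^{d})^{e}$. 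With that one correction your argument is complete.
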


Of course {\bf [K]} holds, when we assume $K$ is generated by powers of disjoint cycles.

The following observation show  that {\bf [T]} holds, assuming $T$, too, is generated by powers of disjoint cycles.

\begin{lemma}\label{L:stabilizer}  Let $s, t$ be positive integers and let $T$ be a subgroup of $S_t$ that is generated by powers of disjoint cycles.   Then for all $\overline{j} \in [s]^t$, the stabilizer $T_{\overline{j}}$ of $\overline{j}$ in the action of $T$ on $[s]^t$ is also generated by powers of disjoint cycles.
\end{lemma}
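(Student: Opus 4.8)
The plan is to reduce the statement to an understanding of how a single generator of $T$ — a power of a cycle — acts on an orbit, and then to see that the stabilizer is again built from powers of cycles on a refined partition of $[t]$. Write $T = \langle \gamma_1^{a_1}, \gamma_2^{a_2}, \ldots, \gamma_m^{a_m} \rangle$ where $\gamma_1, \ldots, \gamma_m$ are disjoint cycles in $S_t$, say $\gamma_\ell$ has support $D_\ell \subseteq [t]$ and length $d_\ell = |D_\ell|$, and $a_\ell \mid d_\ell$ (we may take $a_\ell$ to be a divisor; replacing $\gamma_\ell^{a_\ell}$ by the subgroup it generates does not change $T$). Since the $D_\ell$ are pairwise disjoint, $T$ is the internal direct product $\prod_\ell \langle \gamma_\ell^{a_\ell} \rangle$, and a coordinate-vector $\overline{j} \in [s]^t$ is fixed by $\tau = \prod_\ell \sigma_\ell$ (with $\sigma_\ell \in \langle \gamma_\ell^{a_\ell}\rangle$) if and only if $\sigma_\ell$ fixes $\overline{j}$ restricted to $D_\ell$ for each $\ell$. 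Hence $T_{\overline{j}} = \prod_\ell \bigl(\langle \gamma_\ell^{a_\ell}\rangle\bigr)_{\overline{j}|_{D_\ell}}$, and it suffices to prove the lemma for a single cyclic factor: if $\gamma$ is a $d$-cycle, $a \mid d$, $H = \langle \gamma^a \rangle$, and $\overline{j} \in [s]^d$, then $H_{\overline{j}}$ is generated by a power of $\gamma$.

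For that single-cycle case I would argue as follows. Label the support of $\gamma$ cyclically as $0, 1, \ldots, d-1$ so that $\gamma^a$ is the rotation $i \mapsto i+a \pmod d$; this rotation has order $d/a$. An element $\gamma^{ab}$ (for $0 \le b < d/a$) fixes $\overline{j}$ precisely when $j_{i + ab} = j_i$ for all $i$, i.e. when the string $\overline{j}$ is invariant under rotation by $ab$. The set of $c \in \{0, 1, \ldots, d-1\}$ for which rotation by $c$ fixes $\overline{j}$ is closed under addition mod $d$ and under negation, so it is a subgroup of $\mathbb{Z}_d$, hence equals $e\mathbb{Z}_d$ for the least positive such $c$, call it $e$, with $e \mid d$. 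Therefore $H_{\overline{j}} = \{\gamma^c : c \in e\mathbb{Z}_d,\ a \mid c\} = \langle \gamma^{\mathrm{lcm}(a,e)} \rangle$, which is a power of the cycle $\gamma$. (If no nonzero rotation fixes $\overline{j}$ then $e = d$ and $H_{\overline{j}}$ is trivial, which is the power $\gamma^d = \mathrm{id}$.) Assembling the factors, $T_{\overline{j}}$ is generated by $\{\gamma_\ell^{c_\ell}\}_\ell$ for suitable exponents $c_\ell$, and these are powers of the pairwise disjoint cycles $\gamma_1, \ldots, \gamma_m$, which is exactly the required form.

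I expect the only real subtlety — not so much an obstacle as a point demanding care — to be the bookkeeping in the reduction to a single cycle: one must be sure that intersecting a direct product of cyclic groups with a pointwise stabilizer really does decompose coordinatewise over the blocks $D_\ell$, which is where disjointness of supports is essential. A secondary point is the convention that "generated by powers of disjoint cycles" should be read as permitting trivial factors (the case $e=d$), so that the statement is not vacuously obstructed when $\overline{j}$ has no nontrivial symmetry on some block; once that is agreed, the group-theoretic core is the elementary fact that the rotational symmetries of a finite cyclic string form a subgroup of $\mathbb{Z}_d$, hence are themselves cyclic of the form $e\mathbb{Z}_d$ with $e \mid d$.
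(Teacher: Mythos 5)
Your proof is correct and follows essentially the same route as the paper's: both decompose $T$ as a direct product of the cyclic groups generated by the powers of the disjoint cycles, observe that the stabilizer condition splits coordinatewise over the supports, and identify the stabilizer inside each cyclic factor as the subgroup generated by a suitable further power of that cycle (your $\langle\gamma^{\mathrm{lcm}(a,e)}\rangle$ is the paper's $\langle\sigma_i^{r_i d_i}\rangle$ with $d_i$ the minimal exponent landing in the stabilizer). Your write-up is somewhat more explicit about the direct-product bookkeeping, but the argument is the same.
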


\begin{proof}
Let $\overline{j} \in [s]^t$ and $\tau \in T$.  Then $\tau \in T_{\overline{j}}$ if and only
if  for all $u, v \in [t]$, $\tau (u) = v$ implies that $j_u = j_v$.  In other words, the partition of $[t]$ defined by the orbits of $[t]$
under $\tau$ must refine the partition of $[t]$ defined $\overline{j}$.   For each $\tau \in T$ there is a minimum $m$ such that
$\tau^m \in  T_{\overline{j}}$.  This is because $\tau^k \in  T_{\overline{j}}$ for at least one $k$, the order of $\tau$,  and if
$\tau^a, \tau^b \in  T_{\overline{j}}$ then with $d = \gcd (a, b)$, $\tau^d \in  T_{\overline{j}}$.

Suppose that $T = \langle \sigma_1^{r_1},  \sigma_2^{r_2}, \ldots,  \sigma_m^{r_m} \rangle$ and $d_i$ is minimum such that
$(\sigma_i^{r_i})^ {d_i} \in  T_{\overline{j}}$ for $i = 1, 2, \ldots, m$.  Then $T_{\overline{j}} = \langle \sigma_1^{r_1 d_1},  
\sigma_2^{r_2 d_2}, \ldots,  \sigma_m^{r_m d_m} \rangle$.  \end{proof}

This completes the proof of Theorem \ref{T:main}.

There are certainly circumstances, other than $K$ and $T$ each being generated by powers of disjoint cycles, under which the
properties {\bf [K]} and {\bf [T]} hold.  Here is an instance.  If $K$ is either the full symmetric group $S_k$ or the alternating subgroup
$A_k$ the the quotient $B(k)/K$ is a $k+1$-element chain -- evidently an SCO!  Let's denote this by $C$.  In this case property {\bf [T]} amounts to 
having that $C^t/T$ is an SCO.   Therefore we have the following, which we state as a corollary since it is a direct consequence of the
proof of Theorem \ref{T:main} and Corollary 7 in \cite{DMT}.

\begin{corollary}
Let $n, k, t$ be positive integers and  $n = kt$.   Then $B_n/G$ is a symmetric chain order for any subgroup $G$ of 
$\Au (B_n)$ defined as follows: $G = K \wr T$ where $K$ is either $S_k$ or the alternating subgroup $A_k$, and $T$ is a subgroup of $S_t$
generated by powers of disjoint cycles.
\end{corollary}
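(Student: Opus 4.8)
The plan is to deduce the corollary directly from the machinery already assembled for Theorem~\ref{T:main}, so essentially no new argument is needed beyond checking that properties \textbf{[K]} and \textbf{[T]} hold for this particular choice of $K$ and $T$. First I would observe that when $K = S_k$ or $K = A_k$, every subset of $[k]$ of a given size lies in a single $K$-orbit (since $S_k$ is transitive on $r$-subsets for each $r$, and so is $A_k$ whenever $k \ge 2$, the exceptional tiny cases being trivial). Hence $B(k)/K$ has exactly one element of each rank $0, 1, \ldots, k$, so it is a $(k+1)$-element chain, which I will call $C$; this is patently an SCO, giving property \textbf{[K]} with $s = 1$ and a single symmetric chain $C_1 = C$ in the (unique) SCD of $B(k)/K$.

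Next I would unwind what property \textbf{[T]} says in this situation. With $s = 1$, the index set $[s]^t$ is the single point $\overline{j} = (1, 1, \ldots, 1)$, and the grid $C(\overline{j}) = C_1^1 \times C_2^2 \times \cdots \times C_t^t$ is just $C^t$. The action of $T$ on $[s]^t$ is trivial, so the stabilizer $T_{\overline{j}}$ is all of $T$, and the induced maps $\widehat{\tau}$ on $C(\overline{j}) = C^t$ are exactly the coordinate permutations of $C^t$ by elements of $T$. Thus \textbf{[T]} reduces to the assertion that $C^t/T$ is an SCO. Since $C$ is a chain, $C^t$ is a product of chains, and $T$ is generated by powers of disjoint cycles, this is precisely the content of Lemma~\ref{l:cycles} (equivalently Corollary~7 of \cite{DMT}). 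So \textbf{[T]} holds.

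Having verified \textbf{[K]} and \textbf{[T]}, I would invoke the conclusion of \S\ref{ss:sufficiency}: the bijection $\Phi$ of Claim~\ref{cl:bijection} transports the SCDs of the quotients $C(\overline{j})/T_{\overline{j}}$, $\overline{j} \in J$, to a collection of symmetric, saturated chains in $B_n/G$ that partition $B_n/G$ (Claim~\ref{cl:quotient}), which is exactly an SCD of $B_n/G$. Hence $B_n/G$ is an SCO, as claimed. I expect the only genuinely substantive point to be the transitivity of $A_k$ on $r$-subsets of $[k]$ (needed to identify $B(k)/A_k$ with a chain), but this is classical and the small cases $k \le 2$ can be checked by hand; everything else is a direct specialization of the proof of Theorem~\ref{T:main}.
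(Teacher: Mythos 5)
Your proof is correct and takes essentially the same route as the paper: identify $B_k/K$ with a $(k+1)$-element chain $C$ so that \textbf{[K]} holds with $s=1$, observe that \textbf{[T]} then reduces to $C^t/T$ being an SCO, and invoke Lemma~\ref{l:cycles} together with the machinery of \S\ref{ss:sufficiency}. The only wrinkle is the case $k=2$ with $K=A_2$, where $A_2$ is trivial and $B_2/A_2=B_2$ is not a chain; there the corollary instead follows directly from Theorem~\ref{T:main}, a tiny gap present in the paper's own statement as well and adequately covered by your ``check small cases by hand'' remark.
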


\section{More General SCOs}\label{s:dhand}

The methods used in Section \ref{s:mainresult} provide what appears to be a new and elementary proof
of Dhand's result on quotients of SCOs by a cyclic group \cite{D}.   We also use Lemma \ref{l:cycles} (\cite{DMT}, Corollary 7).

\begin{theorem}\label{T:dhand}
Let $P$ be a symmetric chain order and let $G$ be a subgroup of $S_n$ generated by powers of disjoint cycles.  Then
$P/G$ is a symmetric chain order.
\end{theorem}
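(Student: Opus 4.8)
The plan is to mimic the architecture of the proof of Theorem~\ref{T:main}, replacing the "outer" Boolean lattice $B_n$ by the power $P^n$ and dropping the "inner" wreath factor $K$ (equivalently, taking $k=1$). First I would reduce to the case that $G$ is generated by a single power of a cycle on each of its orbits; since $G$ is generated by powers of disjoint cycles, $P^n$ factors as a product $\prod_i P^{n_i}$ over the cycle supports, $G$ factors as $\prod_i G_i$ with $G_i = \langle \sigma_i^{r_i} \rangle$ acting only on the $i$-th block, and $P^n/G \cong \prod_i (P^{n_i}/G_i)$; because a product of SCOs is an SCO, it suffices to treat one block, i.e.\ to show $P^m/\langle \sigma^r\rangle$ is an SCO for $\sigma$ an $m$-cycle. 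Writing $H = \langle \sigma^r \rangle$, the orbit of $\sigma^r$ has length $d = m/\gcd(m,r)$, so $H \cong \mathbb{Z}_d$ acting on $P^m$ by a permutation that is itself a product of $\gcd(m,r)$ disjoint $d$-cycles; relabelling, we may as well assume $H = \mathbb{Z}_m$ acting on $P^m$ by cyclic shift (the extra disjoint-cycle structure only splits the coordinates into independent blocks again, handled by the product argument).

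Next I would set up the analogue of the grid decomposition. Since $P$ is an SCO, fix an SCD $D_1, D_2, \dots, D_s$ of $P$; then $P^m = \sum_{\overline{j} \in [s]^m} D_{j_1} \times D_{j_2} \times \cdots \times D_{j_m}$, and each grid $D(\overline{j}) := D_{j_1} \times \cdots \times D_{j_m}$ is a product of chains, hence (by \cite{BTK}) an SCO, and is a saturated, symmetric subset of $P^m$ exactly as in \S\ref{ss:required}. The group $\mathbb{Z}_m$ permutes the index set $[s]^m$ by the same cyclic shift, and for each $\overline{j}$ the cyclic shift induces local isomorphisms $D(\overline{j}) \to D(\sigma(\overline{j}))$ defined coordinatewise in the obvious way, so that the stabilizer $H_{\overline{j}}$ acts on $D(\overline{j})$ by automorphisms. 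Choosing a transversal $J$ of the $\mathbb{Z}_m$-orbits on $[s]^m$, I would prove the exact analogue of Claim~\ref{cl:bijection}: the map $\Phi\colon P^m/H \to \sum_{\overline{j}\in J} D(\overline{j})/H_{\overline{j}}$ sending $[\overline{x}]_H$ to $[\overline{x}]_{H_{\overline{j}}}$ (for the unique $\overline{j}\in J$ with $\overline{x}$ in the $H$-orbit of $D(\overline{j})$) is a well-defined bijection; the key equivalence "$\phi(\overline{u}) = \overline{v}$ for some $\phi \in H$ iff the induced local map carries the grid data of $\overline{u}$ to that of $\overline{v}$" is immediate here because the coordinate shift acts directly, without the conjugation-by-$K_r$ bookkeeping of the Boolean case. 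Then, just as in Claim~\ref{cl:quotient}, pulling back an SCD of each $D(\overline{j})/H_{\overline{j}}$ through $\Phi$ gives saturated chains in $P^m/H$ whose rank intervals are symmetric (using $r_{P^m/H}([\overline{x}]) = r_{P^m}(\overline{x})$ and that $D(\overline{j})$ is symmetric and saturated in $P^m$), so their union is an SCD of $P^m/H$.

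The remaining point is that each $D(\overline{j})/H_{\overline{j}}$ is an SCO, which is where Lemma~\ref{l:cycles} does the work: $D(\overline{j})$ is a product of chains, and $H_{\overline{j}}$, being the stabilizer in a cyclic group generated by a power of a cycle, is again generated by a power of a cycle (this is the content of Lemma~\ref{L:stabilizer} with $t = m$ and, in the reduced situation, $T$ cyclic), hence $D(\overline{j})/H_{\overline{j}}$ is an SCO by Lemma~\ref{l:cycles}. Combining this with the bijection $\Phi$ and the rank computation finishes the single-block case, and the product reduction finishes the theorem.

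I expect the main obstacle to be purely organizational rather than mathematical: making precise the "local isomorphism" $\widehat{\sigma}\colon D(\overline{j}) \to D(\sigma(\overline{j}))$ and verifying it is an order isomorphism compatible with the $H$-action, so that $\Phi$ is genuinely well-defined and bijective — this is the step that in \S\ref{ss:required}--\ref{ss:sufficiency} required the careful notation $X_{r,p}$ and the chain of equivalences in \eqref{e:mainpoint}. In the $P^m$ setting this is cleaner because there is no inner group $K$ and the shift acts directly on coordinates, but one still must check that the SCD of $P$ chosen once and for all is respected by the shift (it is, since the shift permutes identical factors $P$), and that the rank function behaves as claimed on the grids; none of this is deep, but it is the part where a careless argument could hide a gap.
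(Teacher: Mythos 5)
Your main argument --- partitioning $P^n$ into grids $D(\overline{j})$ coming from a fixed SCD of $P$, transporting them by the local isomorphisms induced by coordinate permutation, establishing the bijection $\Phi$ onto $\sum_{\overline{j}\in J} D(\overline{j})/G_{\overline{j}}$, and then invoking Lemma \ref{L:stabilizer} and Lemma \ref{l:cycles} to obtain an SCD of each $D(\overline{j})/G_{\overline{j}}$ to pull back --- is exactly the paper's proof of Theorem \ref{T:dhand} (Claims \ref{cl:bijection2} and \ref{cl:quotient2}), so the heart of your proposal is correct and essentially identical to what is done in Section \ref{s:dhand}.

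The one misstep is in your preliminary reduction. Splitting $G$ over the supports of the disjoint cycles is fine, but your further claim that, for $H=\langle\sigma^r\rangle$ with $\sigma$ an $m$-cycle, the decomposition of $\sigma^r$ into $\gcd(m,r)$ disjoint $d$-cycles ``only splits the coordinates into independent blocks again, handled by the product argument'' is false: $H$ is a single cyclic group of order $d$ acting \emph{diagonally} on those blocks, so $P^m/H$ does not factor as a product of block quotients. Already for $m=4$, $r=2$ the orbit of $(a,b,c,d)$ under $\langle(1\,3)(2\,4)\rangle$ is $\{(a,b,c,d),(c,d,a,b)\}$, which is not a product of independent orbits on the coordinate sets $\{1,3\}$ and $\{2,4\}$. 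The correct regrouping, if you want one, is $P^m\cong (P^{\gcd(m,r)})^d$ with $\mathbb{Z}_d$ acting by cyclic shift on the $d$ copies of the SCO $P^{\gcd(m,r)}$. Fortunately the error is inessential: the grid/stabilizer argument you describe requires no reduction at all and applies verbatim to any $G$ generated by powers of disjoint cycles, which is precisely how the paper proceeds.
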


\begin{proof}
Let $C_1, C_2, \ldots, C_s$ form an SCD of $P$.   As noted after (\ref{e:grids}) in {\bf \S2.1}, the family
of grids 
$$C(\overline{j}) = C_{j_1} \times C_{j_2} \times \cdots \times C_{j_n}, \ \overline{j} = (j_1, j_2, \ldots , j_n) \in [s]^n$$
provides a partition of $P^n$ into symmetric, saturated subsets.   The group $G$ acts on $[s]^n$ as before, $\sigma(j_1, j_2, \ldots, j_n) =
(j_{\sigma^{-1}(1)},  j_{\sigma^{-1}(2)}, \ldots, j_{\sigma^{-1}(n)})$, and defines these mappings:

\qquad $\sigma$ permutes the members of the family $\{ C(\overline{j}) \ | \ \overline{j} \in [s]^n \}$;

\qquad $\sigma: P^n \to P^n$, $\sigma(\overline{p}) = (p_{\sigma^{-1}(1)}, p_{\sigma^{-1}(2)}, \ldots, p_{\sigma^{-1}(n)}, )$, $\overline{p} = (p_1, p_2, \ldots, p_n) \in P^n$, 

\qquad\qquad is an automorphism; and,

\qquad for each $\overline{j} \in [s]^n$, the restriction of $\sigma$ is a local isomorphism of $C(\overline{j})$ to $C(\sigma(\overline{j}))$.

Let $J \subseteq [s]^n$ be a system of representatives of the $G$-orbits of $[s]^n$.   Note that for each $\overline{j} \in [s]^n$ and $\sigma \in G_{\overline{j}}$, the stabilizer of $\overline{j}$ in $G$, the restriction of $\sigma$ is an automorphism of $C(\overline{j})$.  

\begin{claim}\label{cl:bijection2}
Let $\Psi: P^n/G \to \sum_{\overline{j} \in J} C(\overline{j})/G_{\overline{j}}$ be defined by
$$\Psi([\overline{y}]_G) = [\overline{x}]_{G_{\overline{j}}}$$
where $\overline{x} \in [\overline{y}]_G$ and $\overline{x} \in C(\overline{j})$ for some $\overline{j} \in J$.  Then $\Psi$ is a bijection.
\end{claim}

The proof of this claim is analogous to that of  Claim \ref{cl:bijection}, with simpler notation.  If $[\overline{y'}]_G = [\overline{y}]_G$,
$\overline{x'} \in [\overline{y'}]_G$ with $\overline{x'} \in C(\overline{j'})$ for some $\overline{j'} \in J$, and  $\overline{x}, \overline{y}$ are
as in the statement of Claim \ref{cl:bijection2}, then there exist $\sigma_1, \sigma_2, \sigma_3 \in G$ such that
$$\sigma_1(\overline{x}) = \overline{y}, \sigma_2(\overline{y'}) = \overline{y}, \ \text{and} \ \sigma_3(\overline{x'}) = \overline{y'}.$$
Thus, $\sigma_1^{-1} \sigma_2 \sigma_3 (\overline{x'}) = \overline{x}$.  Since $\overline{x} \in C_{\overline{j}}$, $\overline{x'} \in C_{\overline{j'}}$,
and $\overline{j}, \overline{j'} \in J$, we have $\overline{j} = \overline{j'}$ and, thus, $\sigma_1^{-1} \sigma_2 \sigma_3 \in G_{\overline{j}}$.  This shows that $\Psi$ is well-defined.
It is easily shown to be bijective.

By Lemma \ref{L:stabilizer} and the hypothesis that $G$ is generated by powers of disjoint cycles, we have that the same is true for each $G_{\overline{j}}$.  By Lemma \ref{l:cycles}, each $C(\overline{j})/G_{\overline{j}}$ has an SCD.    Let  $\overline{j} \in J$ and let $\mathcal{C}$ be a symmetric chain in an SCO of $C(\overline{j})/G_{\overline{j}}$. That $P^n/G$ has an SCD follows from

\begin{claim}\label{cl:quotient2}
The inverse image $\Psi^{-1} (\mathcal{C})$ of $\mathcal{C}$ under $\Psi$ is a symmetric, saturated chain in $P^n/G$.
\end{claim}

Follow the proof of Claim \ref{cl:quotient} in the obvious way to establish that the set of ranks of elements of $\Psi^{-1} (\mathcal{C})$ is a
symmetric interval in the ranks of $P^n/G$ and that if $\Psi([\overline{y}]_G) \le \Psi([\overline{u}]_G)$ in $\mathcal{C}$ then 
 $[\overline{y}]_G \le [\overline{u}]_G$ in $P^n/G$.  \end{proof}

\section{Examples and Questions}\label{s:examples}

The subgroups of $S_n$ to which Theorem \ref{T:main} and the results of \cite{DMT} apply appear to be quite a restricted
class.   Unfortunately, we do not have a very satisfying description of the family of groups to which our
methods apply.   We can provide a catalog of those subgroups of $S_n$ that, by these results, do generate 
quotients of $B_n$ with SCDs for some small values of $n$.  We display the collection for $n = 6$.

First, let's review the facts we can apply.   Let $G$ be a subgroup of $S_n$.

 \qquad     $\bullet$ If $[n] = X_1 \cup X_2 \cup \cdots \cup X_m$ is a partition and $G = G_1 \times G_2 \times \cdots \times G_m$ where
     $G_i = G_{|X_i}$ then $B_n/G$ is an SCO provided that each $B(X_i)/G_i$ is an SCO.
       
 \qquad     $\bullet$  If $G = \langle \sigma^r \rangle$ for some cycle $\sigma \in S_n$ then $B_n/G$ is an SCO.
         
 \qquad    $\bullet$ Theorem \ref{T:main}.    
     
For $n = 6$, we can organize the subgroups by listing them according to the partitions of 6 given by the
orbit sizes, then refining with the cycle structure.  The list given should be all the subgroups, up to conjugation, that are handled
by the three statements above and the observation that the quotient of $B_n$ by either the symmetric group $S_n$ or the alternating group
$A_n$ is an $n+1$-element chain.

 \begin{figure}
 
    \begin{tabular}{l  l  l  r} \hline \\[-.1cm]
    {\bf Partition of 6} &  \qquad {\bf Generators} &  \qquad {\bf Description} &  \qquad {\bf Order} \\[.1cm]  \hline  \\ 
    1 + 1 + 2 + 2 &  \qquad (1 2 ), (3 4) & \qquad $ \Z_2 \times \Z_2$ &  \qquad 4 \\[.1cm] 
                            &  \qquad (1 2 )(3 4)  &  \qquad$ \Z_2$                     &  \qquad 2 \\[.2cm] \hline \\[-.2cm]
   1 + 1 + 4 &  \qquad (1 2 3 4) & \qquad $ \Z_4 $ &  \qquad 4 \\[.1cm] 
                    &  \qquad (1 2 3 4), (1 2)  &  \qquad$ S_4$                     &  \qquad 24 \\[.1cm] 
                    &  \qquad (1 2 3), (2 3 4)  &  \qquad$A_4$                     &  \qquad 12 \\[.1cm] 
                    &  \qquad (1 2 3 4), (1 3)  &  \qquad$ \Z_2 \wr \Z_2 \cong D_8$     &  \qquad  8   \\[.2cm] \hline \\[-.2cm]

   1 + 2 + 3 &  \qquad (1 2), (3 4 5) & \qquad $ \Z_2 \times \Z_3 $ &  \qquad 6 \\[.1cm]                  
                    &  \qquad (1 2), (3 4 5), (3 4)  &  \qquad$ \Z_2 \times S_3$     &  \qquad  12   \\[.2cm] \hline \\[-.2cm]
      
   2 + 2 + 2 &  \qquad (1 2), (3 4), (5 6) & \qquad $ \Z_2 \times \Z_2 \times \Z_2 $ &  \qquad 8 \\[.1cm]                  
                    &  \qquad (1 2)(3 4), (5 6)     &  \qquad$ \Z_2 \times \Z_2$     &  \qquad  4 \\[.1cm] 
                    &  \qquad (1 2)(3 4)(5 6)     &  \qquad$ \Z_2$     &  \qquad  2  \\[.2cm] \hline \\[-.2cm]                                                                                                                                                  3 + 3 &  \qquad (1 2 3), ( 4 5 6) & \qquad $ \Z_3 \times \Z_3$ &  \qquad 9 \\[.1cm]                  
                    &  \qquad (1 2 3), (1 2), (4 5 6)     &  \qquad$ S_3 \times \Z_3$     &  \qquad  18\\[.1cm] 
                    &  \qquad (1 2 3), (1 2), ( 4 5 6), (4 5)  &  \qquad$ S_3 \times S_3$     &  \qquad  36 \\[.2cm] \hline \\[-.1cm]
   2 + 4 &  \qquad (1 2), (3 4 5 6) & \qquad $ \Z_2 \times \Z_4 $ &  \qquad 8 \\[.1cm]                  
                    &  \qquad (1 2), (3 4 5 6), (3 4)     &  \qquad$ \Z_2 \times S_4$     &  \qquad  48 \\[.1cm] 
                    &  \qquad (1 2), (3 4 5), (4 5 6)     &  \qquad$ \Z_2 \times A_4$     &  \qquad  24 \\[.1cm] 
                    &  \qquad (1 2), (3 4 5 6), (3 5)     &  \qquad$ \Z_2 \times (\Z_2 \wr \Z_2)$     &  \qquad  16 \\[.2cm] \hline \\[-.2cm]  
  1 + 5 &  \qquad (1 2 3 4 5) & \qquad $ \Z_5$ &  \qquad 5 \\[.1cm]            
                     &  \qquad (1 2 3 4 5), (1 2 3)     &  \qquad$ A_5$     &  \qquad  60  \\[.1cm]       
                    &  \qquad (1 2 3 4 5), (1 2)     &  \qquad$ S_5$     &  \qquad  120  \\[.2cm] \hline \\[-.2cm]    
   6 &  \qquad (1 2 3 4 5 6) & \qquad $ \Z_6$ &  \qquad 6 \\[.1cm]  
        &    \qquad (1 2 3 4 5), (1 2 3), (1 2)(5 6) & \qquad $ A_6$ &  \qquad 360 \\[.1cm]  
                  &  \qquad (1 2 3 4 5 6), (1 2) & \qquad $ S_6$ &  \qquad 720 \\[.1cm]  
                    &    \qquad (1 2 3)(4 5 6), (1 4) & \qquad $\Z_2 \wr \Z_3$ &  \qquad 24 \\[.1cm]        
                     &    \qquad (1 2 3), (1 4)(2 5)(3 6) & \qquad $\Z_3 \wr \Z_2$ &  \qquad 18 \\[.1cm]     
                     &    \qquad (1 2 3), (1 2), (1 4)(2 5)(3 6) & \qquad $ S_3 \wr \Z_2$ &  \qquad 72 \\[.2cm] \hline     
    \end{tabular}
 
 \end{figure}

The partitions $6 = 1 + 1 + \cdots + 1$ and $6 = 1 + \cdots + 1 + 2$ each give only one group, $(1)$ and $\mathbb{Z}_2 \cong  \langle (12) \rangle$, respectively.  Also, $6 = 1 + 1 + 1 + 3$ yields two: $\mathbb{Z}_3 \cong \langle (1\  2 \ 3) \rangle$ and $S_3 \cong  \langle (1\  2 \ 3), (1 2) \rangle$.    The rest are listed in the table.  This is far from all the groups of degree 6.  In \cite{DM}, there are 16 transitive subgroups of $S_6$ listed.
Our table contains 6.  One familiar transitive subgroup of $S_6$ missing from our list is the dihedral group $D_{12}$.  The dihedral groups
seem to be the next interesting case.

{\bf Problem 1:} For all $n \ge 1$, let $D_{2n}$ denote the dihedral group of symmetries of a regular $n$-gon.  Show that 
$B_n/D_{2n}$ is an SCO.

We know this only for the trivial cases $n = 1, 2, 3$ and, as noted in the table, $n = 4$, since $D_8$ is a
wreath product $\Z_2 \wr \Z_2$.   Note that $D_{2n}$ is the semidirect product $\Z_n \rtimes \Z_2$.  However the dihedral groups
are not wreath products for $n \ge 5$.  This is easy to see: each permutation in $D_{2n}$ has either no fixed points, 
or one (for odd $n$) or two (for even $n$), or $n$ fixed points.    However, a wreath product $K \wr T$ has maps with
$k, 2k, \ldots, tk$ fixed points, where $K$ is a subgroup of $S_k$ and $T$ is a subgroup of $S_t$.

The approach presented above can be extended in some other special cases.  Here is an example.  Let 

\qquad $G$ be the subgroup of $S_8$ generated by $\{ (1 5)(2 6)(3 7)(4 8) , (1 4)(2 3) \}$, and 

\qquad $H$ be the subgroup of $G$ generated by $\{ (1 5)(2 6)(3 7)(4 8) , (1 4)(2 3)(5 8)(6 7) \}$.

Then $G =K \wr T$ where
  
 \qquad $K = \langle (1 2 4 3)^2 \rangle$ is a subgroup of $S_4$ and $T = \langle (12) \rangle$ is a subgroup of $S_2$.   

Thus $B_8 / G$ is an SCO.  One can describe how to refine the $G$-orbits to obtain  $H$-orbits and how to produce the 
additional symmetric chains required for an SCD of $B_8 / H$.   Unfortunately,
 this sort of argument does not appear to yield anything for $D_{2n}$ since it is not a ``manageable" subgroup of any
 group we can handle, such as $Z_n \wr Z_2$ or $Z_2 \wr Z_n$.

As noted in the introduction, Stanley \cite{RPS} first raised the question of whether the distributive lattice $L(k, t)$, the poset with elements all integer sequences $\overline{a} = (a_1, a_2, \ldots, a_t)$ such that $0 \le a_1 \le \cdots \le a_t \le k$  and  componentwise order,
is an SCO for all $k$ and $t$.
He noted \cite{RPS2, RPS3} that $L(k, t)$ is obtained as a quotient of the Boolean lattice $B_{kt}$ by the
group $S_k \wr S_t$ with its natural action on $[k] \times [t]$.  (This is the action described at the beginning of  Section \ref{s:mainresult}.)    It is easiest to see this if we think of $L(k, t)$ as the set of down sets of the grid $\overline{k} \times \overline{t}$ ordered
by containment (using $\overline{m}$ to denote the $m$-element chain).  The wreath product $S_k \wr S_t$ acts on the elements of $\overline{k} \times \overline{t}$ by permuting the
elements of $C_j = \{ (1, j), (2, j), \ldots, (k,j) \}$  independently under $S_k$ for each $j = 1, 2, \ldots, t$, then permuting $C_1, C_2, \ldots, C_t$.  Each
equivalence class under this action contains a unique down set of  $\overline{k} \times \overline{t}$.

If we regard this description in two steps then the independent permutations of the elements in each  $C_j$
produce the quotient $(B_k/S_k)^t$, just as in (\ref{e:inside}) in Section \ref{s:mainresult}.  In this case, $B_k/S_k \cong \overline{k+1}$.  The second part of the action is just $S_t$ acting on the coordinates of $(\overline{k+1})^t$.  We 
restate Stanley's problem.

{\bf Problem 2:}  Show that for all $k, t$, $L(k, t)$ is an SCO.  Equivalently, show that this  quotient is an SCO:
$$ B_{kt} / (S_k \wr S_t ) \ \cong \ (\overline{k+1})^t / S_t .$$
In thinking about Dhand's result, one can ask several types of questions.  If we focus on groups acting by permuting coordinates,
we can pose this problem.

{\bf Problem 3:}  Determine whether $P^n/G$ is an SCO, assuming that $P$ is an SCO and the subgroup $G$ of $S_n$ acts on
$P^n$ by permuting coordinates.

One approach to this follows the strategy given in {\bf \S2.1} and the proof of Theorem \ref{T:dhand}.  With an SCD 
 $C_1, C_2, \ldots, C_s$ of $P$, the family of grids  $C(\overline{j})$, $\overline{j} \in [s]^t$, forms a partition of $P^n$ into
 symmetric, saturated subsets.   Every $\sigma \in G$ permutes the members of the family $\{ C(\overline{j}) \ | \ \overline{j} \in [s]^n \}$
 and the restriction of $\sigma$ defines a local isomorphism of $C(\overline{j})$ to $C(\sigma(\overline{j}))$.  We have an SCD for
 $P^n/G$ if we can obtain an SCD of each $C(\overline{j})/G_{\overline{j}}$ for a set $J$ of representatives of the $G$-orbits of
 $[s]^n$.   If we follow this line, quotients of products of chains could be the key to Problem 3.   Of course, in the case
 that $G = S_n$, the stabilizers $G_{\overline{j}}$ are themselves products of symmetric groups acting on disjoint subsets of
 $[n]$, so we are back to Stanley's problem.
 
 In another direction, we can ask (as we did in the introduction) whether $P/G$ is an SCO, given that $P$ has an SCD and $G$ is a subgroup of $\Au(P)$.  This would seem to be quite different from the problem of considering subgroups of $S_n$ acting by permuting coordinates of
 a product.  A special case is to ask: for which SCOs $P$ is $P/\Au(P)$ an SCO?  Most partially ordered sets have no nontrivial 
 automorphisms, so there is an immediate positive and vacuous answer.  But in general?

\end{document}